\newcommand\cyr{%
\renewcommand\rmdefault{wncyr}%
\renewcommand\sfdefault{wncyss}%
\renewcommand\encodingdefault{OT2}%
\normalfont
\selectfont}
\DeclareTextFontCommand{\textcyr}{\cyr}
\DeclareFontFamily{OT1}{rsfs}{}
\DeclareFontShape{OT1}{rsfs}{n}{it}{<-> rsfs10}{}
\DeclareMathAlphabet{\mathscr}{OT1}{rsfs}{n}{it}
\numberwithin{equation}{section}
\newtheorem{theorem}{Theorem}[section]
\newtheorem{lemma}[theorem]{Lemma}
\newtheorem{proposition}[theorem]{Proposition}
\newtheorem*{proposition*}{Proposition}
\newtheorem{corollary}[theorem]{Corollary}
\newtheorem*{maintheorem}{Main Theorem}
\theoremstyle{definition}
\newtheorem{definition}[theorem]{Definition}
\newtheorem{remark}[theorem]{Remark}
\newtheorem{Notation}[theorem]{Notation}
\theoremstyle{remark}
\newtheorem{example}[theorem]{Example}
\newtheorem*{acknowledgements}{Acknowledgements}
\newcommand{\Spec}{\operatorname{Spec}}
\newcommand{\fm}{\mathfrak{m}}
\newcommand{\fp}{\mathfrak{p}}
\newcommand{\fq}{\mathfrak{q}}
\newcommand{\mcQ}{\mathcal{Q}}
\newcommand{\mcP}{\mathcal{P}}
\newcommand{\bprop}{\begin{prop}}
\newcommand{\eprop}{\end{prop}}
\newcommand{\bthem}{\begin{them}}
\newcommand{\ethem}{\end{them}}
\newcommand{\blem}{\begin{lem}}
\newcommand{\elem}{\end{lem}}
\newcommand{\bdefn}{\begin{definition}}
\newcommand{\edefn}{\end{definition}}
\newcommand{\bpr}{\begin{proof}}
\newcommand{\epr}{\end{proof}}
\newcommand{\bex}{\begin{example}}
\newcommand{\eex}{\begin{example}}
\newcommand{\bit}{\begin{itemize}}
\newcommand{\eit}{\end{itemize}}
\newcommand{\ben}{\begin{enumerate}}
\newcommand{\een}{\end{enumerate}}
\newcommand{\bprob}{\begin{prob}}
\newcommand{\eprob}{\end{prob}}
\newcommand{\bco}{\begin{cor}}
\newcommand{\eco}{\end{cor}}
\newcommand{\gp}{\operatorname{gp}}
\begin{document}

\title{Log-regularity of monoid algebras}

\author{Shinnosuke Ishiro}
\address{National Institute of Technology, Gunma College, 580 Toriba-machi, Maebashi-shi, Gunma 371-8530, Japan}
\email{shinnosukeishiro@gmail.com}
\keywords{Log rings, log-regularity, local rings of monoid algebras}
\subjclass[2020]{Primary: 14A21; Secondary: 13B30, 13H10, 20M14}

\begin{abstract}
    Local log-regular rings are Cohen--Macaulay local domains introduced by Kazuya Kato to expand the theory of toric varieties without a base.
    In this note, we show that local rings of monoid algebras over regular rings are log-regular.
    As a corollary, we show that monoid algebras are log-regular rings.
\end{abstract}

\maketitle

\section{Introduction}

In order to explore arithmetic geometry and commutative algebra in mixed characteristic, one needs a generalization of classes of commutative rings over fields in the traditional theory.
K. Kato introduced local log-regular rings and log-regular schemes to apply the theory of toric geometry to arithmetic geometry (\cite{Kat94}).
Local log-regular rings are a certain class of Noetherian local rings equipped with a structure given by a monoid homomorphism (see \Cref{LogRegDef}).
This class of rings has similar properties to normal affine semigroup rings, for instance, they are Cohen--Macaulay normal domains.
Several results of local log-regular rings, including the structure of canonical modules and of the divisor class groups, can be found in the author's paper \cite{Ish25}.
The relation with perfectoid theory is contained in \cite{GRBook24} or \cite{INS25}.
For a treatment in algebraic and arithmetic geometry, we refer the reader to \cite{OgusBook}.
Many properties of local log-regular rings are induced by an analogue of Cohen's structure theorem.
Namely, if a local log-regular ring is complete, then the ring is isomorphic to a complete monoid algebra (if the ring is of equal characteristic) or a homomorphic image of a complete monoid algebra (if the ring is of mixed characteristic).
Local log-regular rings naturally appear in the study of the resolution of singularities.
An example of local log-regular rings appeared in Abhyankar's study of resolution of arithmetic surfaces (\cite{Abh65}), which is called \textit{Jungian domains}.
In recent work, log-regularity plays a crucial role in Gabber's weak local uniformization (\cite{ILO14}).

As mentioned above, local log-regular rings frequently appear in the study of commutative algebra and arithmetic geometry, but examples of non-complete local log-regular rings are rather scarce in the literature, except for Jungian domains.
In view of Kato's introduction of log-regularity, it is natural to ask whether localizations of monoid algebras are log-regular.
However, it is not immediate whether localizations of monoid algebras are log-regular, and to the best of our knowledge, this question has not been explicitly addressed in the literature.
In this note, we prove the following theorem, which clarifies this behavior and provides new examples of non-complete local log-regular rings:

\begin{maintheorem}\label{MainTheorem}
    Let $\mathcal{Q}$ be a cancellative, finitely generated, and root closed monoid\footnote{A monoid which satisfies the above properties is also called a positive normal affine semigroup ring.} such that the quotient group $\mathcal{Q}^{gp}$ is torsionfree.
    Let $A$ be a regular ring (that is not necessarily local).
    Let $R := A[\mathcal{Q}]$ be the monoid algebra.
    Then, for any prime ideal $\fp \subset R$ with its contraction $P := \fp \cap \mathcal{Q}$, the triple $(R_\fp, \mathcal{Q}_P, \iota_\fp : \mathcal{Q}_P \hookrightarrow R_\fp)$ is a local log-regular ring.
\end{maintheorem}

Main Theorem provides concrete examples of non-complete local log-regular rings.
Let $\mathbb{Z}$ be the ring of integers and let $\mcQ \subset \mathbb{N}^4$ be a monoid generated by four elements 
\[
(1,1,0,0), (0,0,1,1), (1,0,1,0), (0,1,0,1).
\]
Then $\mathbb{Z}[\mcQ]$ is isomorphic to $\mathbb{Z}[x,y,z,w]/(xy-zw)$.
For a prime ideal $\fp \in \Spec(\mathbb{Z}[\mcQ])$, we obtain that $\mathbb{Z}[\mcQ]_\fp$ is a local log-regular ring.

As a corollary, a monoid algebra $R[\mcQ]$ appearing in Main Theorem is a non-local log-regular ring (\Cref{LogRegMonAlg}).
Log-regular rings realize log-regular schemes and log-regular schemes play central roles in arithmetic geometry.
For instance, W. Nizio\l ~\cite{Niz06} established a desingularization of log-regular schemes.
We expect that our result promotes understanding of log-regularity.

\section{Proof of Main Theorem}
In this note, a \textit{monoid} means a commutative semigroup with a unity $0$. Unless otherwise noted, we denote the binary operation of monoids by $+$.
Rings are also assumed to be commutative.
When considering a ring as a monoid, its monoid structure is always multiplicative.

\begin{Notation}
    Let $\mathcal{Q}$ be a monoid.
    We denote the group of units of a ring $R$ (resp. a monoid $\mcQ$) by $R^\times$ (resp. $\mcQ^*$).
    We denote $\mathcal{Q}\setminus\mcQ^*$ by $\mcQ^+$.
\end{Notation}

\begin{definition}
    Let $\mathcal{Q}$ be a monoid.
    \begin{enumerate}
        \item A subset $I$ of $\mcQ$ is an \textit{ideal} if $a+x \in I$ for any $a \in \mcQ$ and $x \in I$.
        \item An ideal $\fp$ of $\mcQ$ is called \textit{prime} if $\fp \neq \mcQ$ and $a+b \in \fp$ implies $a \in \fp$ or $b \in \fp$ for any $a,b \in \fp$.
        \item A subset $F$ of $\mcQ$ is called a \textit{face} if there exists a prime ideal $\fp$ such that $F = \mcQ \setminus \fp$.
    \end{enumerate}
\end{definition}

It is easy to show that a subset $F$ of a monoid $\mcQ$ is a face if and only if it is a submonoid and $p+q \in F$ implies $p, q \in F$.
By using this, one can prove that a face of a finitely generated monoid is also finitely generated.
We also have a one-to-one correspondence between the set of prime ideals and the set of faces.
This is formulated the map that sends $\fp$ to $F_{\fp} :=\mathcal{Q}\setminus\fp$.


We recall some basic properties of monoids.
For details, we refer to \cite{GFBook06}, \cite{GilBook84}, and \cite{OgusBook}.

\begin{definition}\label{MonPropDef}
Let $\mathcal{Q}$ be a monoid.
\begin{enumerate}
    \item $\mathcal{Q}$ is called \textit{cancellative} if for $x, x'$ and $y \in \mcQ$, $x+y=x' +y$ implies $x=x'$.
    \item $\mathcal{Q}$ is called \textit{reduced} if $\mcQ^{*}=0$.
    \item $\mcQ$ is called \textit{root closed} if it satisfies the following conditions.
        \begin{itemize}
        \item $\mcQ$ is cancellative.
        \item If $x \in \mcQ^{\gp}$ such that $nx \in \mcQ$ for some $n >0$, then $x \in \mcQ$.
        \end{itemize}
\end{enumerate}
\end{definition}

\begin{definition}
Let $\mcQ$ be a cancellative monoid.
Then we define the \textit{quotient group of $\mathcal{Q}$} as the localization at $(0)$.
In particular, elements of the group are of the form $a-b$, where $a,b \in \mcQ$.
We denote it by $\mathcal{Q}^{gp}$.
\end{definition}

\begin{remark}
Terminology for monoids differs slightly between logarithmic geometry and classical monoid theory.
Throughout this paper, we follow the terminology standard in classical monoid theory; in particular, a cancellative, reduced, and root closed monoid corresponds to what is often called integral, sharp, and saturated, respectively, in the literature on logarithmic geometry.

In addition, a face is also called \textit{divisor-closed}, and $\mathcal{Q}^{gp}$ is also called the \textit{Grothendieck group of $\mcQ$} and denoted by $\mathbf{q}(\mcQ)$. These are used in \cite{GFBook06}. 
\end{remark}

The localization of a monoid at a prime ideal is defined in the same way as that of a ring.
The following lemma explains some of the properties of a monoid that are inherited by each of its localizations.

\begin{lemma}\label{LocalizationProp}
    Let $\mathcal{Q}$ be a monoid and let $\fp \subseteq \mathcal{Q}$ be a prime ideal.
    Denote by \textbf{P} any of the properties cancellative, finitely generated, or root closed.
    If $\mathcal{Q}$ satisfies $\textbf{P}$, then so does $\mathcal{Q}_{\fp}$.
\end{lemma}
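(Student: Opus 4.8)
The plan is to give an explicit description of $\mathcal{Q}_P$ and then check the three properties one at a time. Write $S := \mathcal{Q}\setminus P$; since $P$ is prime, $S$ is a submonoid of $\mathcal{Q}$, and $\mathcal{Q}_P$ is the quotient of $\mathcal{Q}\times S$ by the relation $(x,s)\sim(x',s')$ whenever $t+x+s'=t+x'+s$ for some $t\in S$, exactly as for rings; I write $x-s$ for the class of $(x,s)$ and $\iota\colon\mathcal{Q}\to\mathcal{Q}_P$, $x\mapsto x-0$, for the canonical homomorphism. If $\mathcal{Q}$ is cancellative, the defining relation simplifies to $x+s'=x'+s$ after cancelling $t$, so the assignment $x-s\mapsto x-s\in\mathcal{Q}^{\gp}$ is a well-defined injective homomorphism $\mathcal{Q}_P\hra\mathcal{Q}^{\gp}$. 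Hence $\mathcal{Q}_P$ is isomorphic to a submonoid of the group $\mathcal{Q}^{\gp}$, and every submonoid of a group is cancellative. Along the way this identifies $\mathcal{Q}_P^{\gp}$ with $\mathcal{Q}^{\gp}$, since the image of $\mathcal{Q}_P$ already contains $\mathcal{Q}$; I reuse this in the root-closed case.

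For the finitely generated case, suppose $\mathcal{Q}=\langle q_1,\dots,q_n\rangle$. The plan is to show that $\mathcal{Q}_P$ is generated by $\iota(q_1),\dots,\iota(q_n)$ together with the inverses $-\iota(q_i)$ for those $i$ with $q_i\notin P$ (note $q_i\notin P$ means $q_i\in S$, so $\iota(q_i)$ is invertible in $\mathcal{Q}_P$). Given $x-s$ with $x\in\mathcal{Q}$ and $s\in S$, write $s=\sum_i b_i q_i$ with $b_i\ge 0$; because $P$ is an ideal and $s\notin P$, no generator occurring with positive coefficient in this expression can lie in $P$, so $-s=\sum_i b_i(-\iota(q_i))$ is a legitimate element of $\mathcal{Q}_P$, and $x-s=\iota(x)+(-s)$. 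Expanding $x=\sum_i a_i q_i$ then exhibits $x-s$ in the submonoid generated by the finite set above, while conversely that set clearly lies in $\mathcal{Q}_P$. Hence $\mathcal{Q}_P$ is finitely generated.

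For the root-closed case, assume $\mathcal{Q}$ is root closed; in particular it is cancellative, so by the first step $\mathcal{Q}_P$ is cancellative with $\mathcal{Q}_P^{\gp}=\mathcal{Q}^{\gp}$. Let $y\in\mathcal{Q}_P^{\gp}=\mathcal{Q}^{\gp}$ with $ny\in\mathcal{Q}_P$ for some $n>0$, say $ny=x-s$ with $x\in\mathcal{Q}$ and $s\in S$. Then $n(y+s)=x+(n-1)s\in\mathcal{Q}$ and $y+s\in\mathcal{Q}^{\gp}$, so root-closedness of $\mathcal{Q}$ forces $y+s\in\mathcal{Q}$, whence $y=(y+s)-s\in\mathcal{Q}_P$. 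This gives root-closedness of $\mathcal{Q}_P$.

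I do not expect a genuine obstacle here; the content is elementary. The one place to be slightly careful is the finite-generation step, where it matters that $P$ is an ideal and not merely that its complement $S$ is a submonoid: this is exactly what prevents a representation of an element $s\notin P$ from involving a generator lying in $P$, and hence keeps the list of adjoined inverses finite. If one preferred to avoid the explicit relation on $\mathcal{Q}\times S$, the cancellative and root-closed parts could be phrased entirely inside $\mathcal{Q}^{\gp}$ via $\mathcal{Q}_P=\mathcal{Q}+(-S)\subseteq\mathcal{Q}^{\gp}$, but the finite-generation argument would be essentially unchanged.
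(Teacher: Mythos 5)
Your proof is correct and follows essentially the same elementary route as the paper: a direct verification of each property on the explicit model of $\mathcal{Q}_P$, with the root-closed step (passing from $ny=x-s$ to $n(y+s)=x+(n-1)s\in\mathcal{Q}$) identical to the paper's. The only differences are cosmetic or improvements: you get cancellativity by embedding $\mathcal{Q}_P$ into $\mathcal{Q}^{\gp}$ rather than checking the cancellation law directly, and your finite-generation step is actually more careful than the paper's, which lists only the images $x_1-0,\dots,x_n-0$ of the generators and omits the needed inverses of the generators lying outside $P$.
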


\begin{proof}
    Note that $F_\fp := \mcQ\setminus\fp \subseteq \mcQ$ is finitely generated.
    Also, the localization by a finitely generated submonoid is also finitely generated.
    Hence $\mathcal{Q}_{\fp}$ is finitely generated.

    Suppose that $\mathcal{Q}$ is cancellative.
    Let us consider the equality in $\mathcal{Q}_{\fp}$
    \[
    (a-b)+(a''-b'') = (a'-b') + (a''-b'').
    \]
    Then, there exists an element $c \in \mathcal{Q}$ such that the equality in $\mathcal{Q}$
    \[
    c+ (a+a'')+(b'+b'') = c+(a'+a'') + (b+b'').
    \]
    Since $\mathcal{Q}$ is cancellative, we obtain 
    \[
    a+b' = a'+b
    \]
    in $\mathcal{Q}$.
    This implies that the equality $a-b = a' - b'$ in $\mathcal{Q}_{\fp}$.
    Hence $\mathcal{Q}_{\fp}$ is also cancellative.

    Finally, suppose that $\mathcal{Q}$ is root closed.
    We already proved that $\mcQ_{\fp}$ is cancellative.
    Pick an element $a \in (\mcQ_{\fp})^{gp}$ such that $na = x-y \in \mathcal{Q}_{\fp}$ for some $n \in \mathbb{Z}_{>0}$, $x \in \mcQ$, and $y \notin \fp$.
    This implies $na +y \in \mathcal{Q}$, hence $n(a+y) = na+ny \in \mcQ$.
    Since $\mcQ$ is root closed, we obtain $a+y \in \mcQ$.
    Therefore $a \in \mcQ_\fp$, as desired.
\end{proof}

An analogue of \Cref{LocalizationProp}

\begin{definition}
    Let $R$ be a ring, let $\mcQ$ be a monoid, and let $\alpha : \mcQ \to R$ be a homomorphism of monoids, where the monoid structure of $R$ is multiplicative.
    Then the triple $(R,\mcQ, \alpha)$ is called a \textit{log ring}.
    Also, if $R$ is a local ring and $\alpha$ is a local homomorphism (i.e. $\alpha^{-1}(R^\times) = \mcQ^*$), then the log ring $(R,\mcQ, \alpha)$ is called \textit{local}. 
\end{definition}

\begin{lemma}\label{localizationlocallog}
Let $\alpha : \mcQ \to \mathcal{P}$ be a homomorphism of monoids and let $G \subseteq \mathcal{P}$ be a face.
Set $F := \alpha^{-1}(G)$.
Then the induced homomorphism $\alpha_F : \mcQ_F \to \mathcal{P}_G$ is local.
In particular, if $(R,\mcQ,\alpha)$ is a log ring and $\fp$ is a prime ideal of $R$, then $(R_\fp, \mcQ_P, \alpha_{\fp})$ is a local log ring where $P := \alpha^{-1}(\fp)$ and $\alpha_\fp(x-y) = \frac{\alpha(x)}{\alpha(y)}$.
\end{lemma}

\begin{proof}
    The second assetion follows from the first assertion immediately.
    For the first assertion, it suffices to show that $\alpha_F^{-1}(\mcP_G^*)$ is contained in $\mcQ_F^*$.
    Pick an element $x = q-f \in \alpha^{-1}_F(\mcP^*_G)$.
    Then we have an element $x' \in \mcP_G$ such that $\alpha_F(x)+x'=0$.
    Also, $\alpha_F(x) + \alpha(f)$ is contained in $\mcP$ and there exists an element $g \in G$ such that $\bigl(\alpha_F(x) +\alpha(f)\bigr) +g = \alpha(q) + g$ in $\mcP$.
    Since $\iota_G\bigl(\alpha_F(x) +\alpha(f)\bigr) + \bigl( x' -\alpha(f) \bigr) = 0$ in $\mcP_G$, we obtain $\alpha_F(x) + \alpha(f) \in \iota_G^{-1}(\mcP_G^*) = G$ where $\iota_G$ is the canonical homomorphism $\mcP \to \mcP_G$.
    Hence $q$ lies in $F$ and this implies that $x$ lies in $\mcQ_F^*$, as desired.
\end{proof}

Let $A$ be a ring and let $\mathcal{Q}$ be a monoid.
Then a \textit{monoid algebra $A[\mcQ]$ over $A$} consists of all formal finite sums $\{ \sum_{q \in \mcQ} a_q e^q |~a_q \in A \}$, where $e^q$ is the image of an element $q \in \mcQ$ in $A[\mcQ]$.

Let $\fp$ be a prime ideal of $\mcQ$ and let $F_\fp:= \mcQ \setminus \fp$ be the corresponding face at $\fp$.
Then the composition 
\begin{equation}\label{MonoidIsomFace}
A[F_\fp]\hookrightarrow A[\mcQ] \twoheadrightarrow A[\mcQ]/\fp A[\mcQ]
\end{equation}
is an isomorphism.
Indeed, the surjectivity follows from the representation $f = \sum_{q \notin \fp} a_qe^q + \sum_{q' \in \fp} a_{q'}e^{q'}$ for any $f \in A[\mcQ]$.
Fix an element $f$ of the kernel of the composition (\ref{MonoidIsomFace}).
Then we have $f = \sum_{p\in \fp} e^p(\sum_{q \in \mcQ} a_q e^q) = \sum a_q e^{q+p} \in A[F_\fp]$.
Suppose that $f$ is not zero.
Then there exist elements $p \in \fp$ and $q \in \mcQ$ such that $p+q \in F_\fp$.
This implies $p \in F_\fp$, but this is a contradiction.
Hence (\ref{MonoidIsomFace}) is injective.

In particular, we have an isomorphism
\begin{equation}\label{MonoidIsomPlus}
A[\mcQ]/\mcQ^+A[\mcQ] \xrightarrow{\cong} A[\mcQ^{*}].
\end{equation}

Next, we introduce the notion of very solidness of a log ring.
This property describes the correspondence between prime ideals of the monoid in a log ring and those of the underlying ring. 

\begin{definition}\label{DefVS}
A log ring $(R,\mcQ,\alpha)$ is \textit{very solid} if for every prime ideal $\fq$ of $\mcQ$, the ideal $\fq R$ of $R$ generated by the image of $\fq$ is also a prime ideal of $R$ such that $\alpha^{-1}(\fq R) = \fq$.
\end{definition}

\begin{example}\label{TorsionfreeMonAlgVS}
    Let $\mcQ$ be a cancellative monoid such that $\mcQ^{gp}$ is torsionfree, let $A$ be an integral domain, and let $\iota : \mcQ \hookrightarrow A[\mcQ]$ be the inclusion.
    Then a log ring $(A[\mcQ], \mcQ, \iota)$ is very solid.
    This is already proved in \cite[Corollary 8.2]{GilBook84} and also in \cite[Chapter \textbf{III} Proposition 1.10.12 (1)]{OgusBook}, but we give a proof here again.
    Let $\fp$ be a prime ideal of $\mcQ$.
    By (\ref{MonoidIsomFace}), we know that $A[\mcQ]/\fp A[\mcQ] \cong A[F_\fp]$ where $F_\fp$ is the corresponding face at $\fp$.
    Since $A$ is an integral domain and $F_\fp$ is cancellative, the monoid algebra $A[F_\fp]$ is also an integral domain.
    Hence $\fp A[\mcQ]$ is a prime ideal.
    Also, since the equalities $\iota^{-1}(\fp A[\mcQ]) = \fp \cap \mcQ = \fp$ holds, the latter condition is satisfied.
\end{example}

\begin{remark}\label{NonVSRemark}
    In the setting of \Cref{TorsionfreeMonAlgVS}, if $A$ is not an integral domain, then $(A[\mcQ],\mcQ, \iota)$ is not very solid.
    Indeed, we also know that $A[\mcQ]/\fp A[\mcQ]$ is isomorphic to $A[F_\fp]$ where $F_\fp$ is a corresponding face at $\fp$.
    However, since $A[F_\fp]$ is not an integral domain, $\fp A[\mcQ]$ is not a prime ideal.
    This implies that $(A[\mcQ], \mcQ, \iota)$ is not very solid.
\end{remark}

\begin{lemma}\label{LocaliryVS}
    Let $(R,\mathcal{Q},\alpha)$ be a log ring.
    If $(R,\mathcal{Q},\alpha)$ is very solid, then the induced log ring $(R_\fp, \mathcal{Q}_P, \alpha_\fp)$ is very solid for any prime ideal $\fp$ of $R$, where $P = \fp \cap \mathcal{Q}$.
\end{lemma}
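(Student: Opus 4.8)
The plan is to translate both conditions required by \Cref{DefVS} through the two localization maps $\lambda\colon\mathcal{Q}\to\mathcal{Q}_P$ and $j\colon R\to R_\fp$, which fit into the commuting square $\alpha_\fp\circ\lambda=j\circ\alpha$ (this is the formula $\alpha_\fp(x-y)=\alpha(x)/\alpha(y)$ together with $\alpha_\fp(\lambda(q))=\alpha(q)/1$). Throughout I read $P=\fp\cap\mathcal{Q}$ as $\alpha^{-1}(\fp)$, so that the face $F:=\mathcal{Q}\setminus P$ is precisely the set of $s\in\mathcal{Q}$ with $\alpha(s)\notin\fp$, i.e.\ with $j(\alpha(s))\in R_\fp^{\times}$.

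First I would recall the standard correspondence for monoid localizations: every prime ideal $\fQ$ of $\mathcal{Q}_P$ has the form $\fq\mathcal{Q}_P=\{\,q-s:q\in\fq,\ s\in F\,\}$, where $\fq:=\lambda^{-1}(\fQ)$ is a prime ideal of $\mathcal{Q}$ contained in $P$, and conversely every prime $\fq\subseteq P$ arises this way. (This is elementary but should be checked for a general, not necessarily cancellative, $\mathcal{Q}$: properness, primality, and the inclusion $\fq\subseteq P$ each need a one-line argument.) Now fix a prime $\fQ$ of $\mathcal{Q}_P$ and put $\fq=\lambda^{-1}(\fQ)\subseteq P$. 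Since $\fq\subseteq P=\alpha^{-1}(\fp)$, the ideal $\fq R$ generated by $\alpha(\fq)$ lies in $\fp$; and by very solidness of $(R,\mathcal{Q},\alpha)$ applied to $\fq$, the ideal $\fq R$ is prime in $R$ with $\alpha^{-1}(\fq R)=\fq$. Hence the extension $(\fq R)R_\fp$ is a prime ideal of $R_\fp$ with $j^{-1}\big((\fq R)R_\fp\big)=\fq R$, by the usual behavior of primes under localization.

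The one genuine computation is that $\fQ R_\fp=(\fq R)R_\fp$. For $q\in\fq$ and $s\in F$ we have $\alpha_\fp(q-s)=j(\alpha(q))\cdot j(\alpha(s))^{-1}$ with $j(\alpha(s))\in R_\fp^{\times}$, so $\alpha_\fp(q-s)$ generates the same $R_\fp$-ideal as $j(\alpha(q))$; as $q$ ranges over $\fq$ these generate $\fQ R_\fp=j(\alpha(\fq))R_\fp=(\fq R)R_\fp$. In particular $\fQ R_\fp$ is prime. For the identity $\alpha_\fp^{-1}(\fQ R_\fp)=\fQ$, the inclusion $\supseteq$ is immediate, and for $\subseteq$ one takes $x-y\in\alpha_\fp^{-1}(\fQ R_\fp)$ with $x\in\mathcal{Q}$, $y\in F$: then $j(\alpha(x))=\alpha_\fp(x-y)\cdot j(\alpha(y))\in(\fq R)R_\fp$, so $\alpha(x)\in j^{-1}\big((\fq R)R_\fp\big)=\fq R$, whence $x\in\alpha^{-1}(\fq R)=\fq$ and therefore $x-y\in\fq\mathcal{Q}_P=\fQ$.

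The main obstacle, such as it is, is purely organizational: pinning down the monoid-localization correspondence $\fQ\leftrightarrow\fq$ for a general $\mathcal{Q}$, and being careful that $P$ means $\alpha^{-1}(\fp)$ so that the square $\alpha_\fp\lambda=j\alpha$ genuinely carries the very-solidness data for $\fq$ over to $\fQ$. Once those are in place, the proof is just the two localization facts ``a prime that survives extends to a prime'' and ``it contracts back'', invoked once on the ring side.
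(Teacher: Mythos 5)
Your proof is correct, and its core is the same localization fact the paper relies on: for a prime $\fQ\subseteq\mathcal{Q}_P$ with $\fq=\lambda^{-1}(\fQ)\subseteq P$, the extended ideal $\fQ R_\fp$ coincides with $(\fq R)R_\fp$, and since very solidness makes $\fq R$ a prime contained in $\fp$, its extension to $R_\fp$ is prime. The paper phrases this primality step slightly differently (it localizes the exact sequence $0\to \fq R\to R\to R/\fq R\to 0$ at $\fp$ and observes that $(R/\fq R)_\fp$ is a domain), but that is the same fact in different clothing. Where you genuinely go beyond the paper's write-up is in completeness: you make explicit the correspondence between primes of $\mathcal{Q}_P$ and primes of $\mathcal{Q}$ contained in $P$ (which justifies the paper's unexplained passage from $\fq$ to ``the corresponding $\fq_P$'' and guarantees $\fq R\subseteq\fp$, so the localized quotient is not zero), and, more importantly, you verify the second half of the definition of very solidness, namely $\alpha_\fp^{-1}(\fQ R_\fp)=\fQ$, via the contraction $j^{-1}\bigl((\fq R)R_\fp\bigr)=\fq R$ together with $\alpha^{-1}(\fq R)=\fq$; the paper's proof only addresses primality and leaves this contraction identity unchecked. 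So your argument is not only valid but is the fuller version of the intended proof.
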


\begin{proof}
    Let $\fq$ be a prime ideal of $\mcQ$.
    Consider the short exact sequence
    \[
    \xymatrix{
    0 \ar[r] & (\fq R)_{\fp} \ar[r] & R_\fp \ar[r] & (R/\fq R)_{\fp} \ar[r] & 0.
    }
    \]
    Note that we have the equality $(\fq R)_\fp = \fq_P R_\fp$ where $\fq_P$ is a prime ideal of $\mcQ_P$ corresponding to $\fq$.
    Moreover, $(R/\fq R)_\fp$ is a domain, which implies that $\fq_P R_\fp$ is a prime ideal.
    Also, we have
    \[
    \alpha^{-1}_{\fp}(\fq_PR_\fp) = \alpha^{-1}_{\fp}((\fq R)_\fp) = \alpha^{-1}(\fq R)_P = \fq_P.
    \]
    Therefore, the assertion holds.
\end{proof}

Let $\mcQ$ be a monoid.
Two elements $a, b \in \mcQ$ are called \textit{associates} if there exists a unit $u \in \mcQ^*$ such that $a=u+b$.
If $a, b \in \mcQ$ are associates, we write $a \simeq b$.
The relation $\simeq$ is a congruence relation and the monoid $\mcQ_{\textnormal{red}} := \mcQ/\simeq$ is called the \textit{associated reduced monoid of $\mcQ$}.
One can easily prove that if $\mcQ$ is cancellative (resp. finitely generated, root closed) then so is $\mcQ_{\textnormal{red}}$.
See \cite[Prerequisites]{GFBook06} and \cite[Chapter \textbf{I}, Proposition 1.3.3 and Proposition 1.3.5]{OgusBook}.
We now recall the notion of log-regularity for commutative rings.
\begin{definition}\label{LogRegDef}
    Let $(R,\mcQ,\alpha)$ be a local log ring, where $R$ is Noetherian and $\mcQ$ is cancellative such that $\mcQ_{\textnormal{red}}$ is finitely generated and root closed.
    We denote by $I_\alpha$ the ideal of $R$ generated by $\alpha(\mathcal{Q}^+)$.
    Then $(R,\mcQ,\alpha)$ is called \textit{log-regular} if it satisfies the following two conditions.
    \begin{enumerate}
        \item $R/I_\alpha$ is a regular local ring.
        \item The equality $\dim R = \dim R/I_\alpha + \dim \mcQ$ holds.
    \end{enumerate}
\end{definition}

\begin{remark}\label{LogRegEquiv}
    In the setting of \Cref{LogRegDef}, if the local log ring $(R, \mcQ, \alpha)$ satisfies the condition (1) (i.e. $R/I_\alpha$ is regular), then the condition (2) is equivalent to the condition that $(R, \mcQ, \alpha)$ is very solid.
    This assertion is stated in \cite[Chapter \textbf{III} Theorem 1.11.1]{OgusBook}.
\end{remark}

Here we restate and prove the main theorem.

\begin{theorem}\label{maintheorem1}
    Let $\mathcal{Q}$ be a cancellative, finitely generated, and root closed monoid such that the quotient group $\mathcal{Q}^{gp}$ is torsionfree.
    Let $A$ be a regular ring that is not necessarily local.
    Let $R := A[\mathcal{Q}]$ be the monoid algebra.
    Then, for any prime ideal $\fp \subset R$ with its contraction $P := \fp \cap \mathcal{Q}$, the triple $(R_\fp, \mathcal{Q}_P, \iota_\fp : \mathcal{Q}_P \hookrightarrow R_\fp)$ is a local log-regular ring.
\end{theorem}

\begin{proof}
    Since $R_\fp$ is a localization of $A_{\fp\cap A}[\mcQ]$ that is the monoid algebra over the regular local ring $A_{\fp \cap A}$, we may assume that $A$ is a regular local ring.
    In particular, $A$ is a domain.
    We note that $\mcQ_P$ is cancellative, finitely generated, and root closed by \Cref{LocalizationProp}.
    By Lemma \ref{localizationlocallog}, we know that $(R_\fp, \mcQ_P, \iota_\fp)$ is a local log ring.
    Moreover, since $\mcQ_P$ is cancellative, finitely generated, and root closed, so is $(\mathcal{Q}_P)_{\textnormal{red}}$.
    Therefore, it suffices to show that the local log ring $(R_\fp, \mathcal{Q}_P, \iota_\fp)$ satisfies the conditions (1) and (2) in \Cref{LogRegDef}.
    
    Let $I_\fp$ be the ideal generated by the image of $P$ under $\iota_\fp$.
    We have the inclusion maps
    \begin{equation}\label{compositelocalization1}
    R= A[\mcQ] \hookrightarrow A[\mcQ_P] \hookrightarrow R_{\fp}.
    \end{equation}
    This yields the injective ring homomorphisms
    \begin{equation}\label{compositelocalization2}
    R/R\cap I_{\iota_\fp} \hookrightarrow A[\mcQ_P]/A[\mcQ_P]\cap I_{\iota_\fp} \hookrightarrow R_\fp/I_{\iota_\fp}.
    \end{equation}
    Moreover, by taking the localizations, we obtain
    \begin{equation}\label{LocalizationHookArrows}
    (R/R\cap I_{\iota_\fp})_\fp \hookrightarrow (A[Q_P]/A[Q_P] \cap I_{\iota_\fp})_{PA[\mcQ_P]} \hookrightarrow (R_\fp/ I_{\iota_\fp})_\fp.
    \end{equation}
    Since the far left of (\ref{LocalizationHookArrows}) is isomorphic to the far right, they are isomorphic to each other.
    Also, the equality $R\cap I_{\iota_\fp} = PA[\mcQ_P]$ holds.
    Hence we have 
    \[
    A[\mathcal{Q}_P]/A[\mathcal{Q}_P]\cap I_{\iota_\fp} = A[\mathcal{Q}_P]/PA[\mathcal{Q}_P] \cong A[(\mathcal{Q}_P)^*],
    \]
    where the isomorphism is induced by (\ref{MonoidIsomPlus}).
    Since $A[(\mathcal{Q}_P)^*]$ is isomorphic to a Laurent polynomial ring, its localizations are regular local rings.
    Therefore $(A[\mcQ]_{\fp}/I_{\iota_\fp})_{\fp} \cong A[\mcQ]_\fp/I_{\iota_\fp}$ is also regular.
    Therefore, it satisfies the condition (1).

    It remains to show that $(R_{\fp}, \mcQ_P, \iota_{\fp})$ satisfies the condition (2) in Definition \ref{LogRegDef}.
    However, $R_\fp$ is a localization of a monoid algebra over a regular local ring.
    Hence, by combining \Cref{TorsionfreeMonAlgVS} with \Cref{LocaliryVS}, we know that $(R_\fp, \mathcal{Q}_P, \alpha_\fp)$ is very solid.
    This implies that it satisfies the condition (2) by \cite[Chapter \textbf{III} Theorem 1.11.1]{OgusBook} (see also \Cref{LogRegEquiv}), as desired.
\end{proof}

By combining our main theorem with the structure theorem of complete local log-regular rings, we obtain an explicit description of the completion of a localization of a monoid algebra.
For details on the structure theorem of complete local log-regular rings, we refer the reader to \cite[Chapter \textbf{III}, Theorem 1.11.2]{OgusBook} or \cite[Theorem 2.22]{INS25}.

\begin{corollary}
    Keep the notation as in \Cref{maintheorem1}.
    Let $k_\fp$ be the residue field of $R_\fp$ and let $\widehat{R_\fp}$ be the completion of $R_\fp$ with respect the maximal ideal $\fp R_\fp$.
    Set $r = \dim R/I_{\iota_\fp}$.
    Then the following assertions hold.
    \begin{enumerate}
        \item 
        If $R_\fp$ is of equal characteristic, then we obtain the following commutative diagram
        \[
        \xymatrix{
        (\mcQ_P)_{\textnormal{red}} \ar[r] \ar[d] & k_\fp \llbracket (\mcQ_P)_{\textnormal{red}} \oplus \mathbb{N}^r \rrbracket \ar[d] \\
        R_\fp \ar[r] & \widehat{R_\fp},
        }
        \]
        where the right vertical arrow is an isomorphism.
        \item
        If $R_\fp$ is of mixed characteristic $p>0$, then we obtain the following commutative diagram
        \[
        \xymatrix{
        (\mcQ_P)_{\textnormal{red}} \ar[r] \ar[d] & C(k_\fp)\llbracket (\mcQ_P)_{\textnormal{red}} \oplus \mathbb{N}^r \rrbracket \ar[d] \\
        R \ar[r] & \widehat{R},
        }
        \]
        where $C(k_\fp)$ is the Cohen ring of $k_\fp$ and the right vertical arrow is a surjective ring map whose kernel is a principal ideal generated by an element whose constant term is $p$.
        In particular, $\widehat{R_\fp}$ is isomorphic to $C(k_\fp)\llbracket (\mcQ_P)_{\textnormal{red}} \oplus \mathbb{N}^r \rrbracket /(\theta)$ for some $\theta \in \fm_{C(k_\fp)\llbracket (\mcQ_P)_{\textnormal{red}} \oplus \mathbb{N}^r \rrbracket}$ where the constant term of $\theta$ is $p$.
    \end{enumerate}
\end{corollary}

\begin{proof}
    Replace the log structure $(R_\fp,\mcQ_P,\iota_\fp )$ with $(R_\fp, (\mcQ_P)_{\textnormal{red}}, (\iota_\fp)_{\textnormal{red}}:(\mathcal{Q}_P)_{\textnormal{red}} \to \mcQ_P \xrightarrow{\iota_\fp} R_\fp )$.
    The fact that $(R_\fp, (\mcQ_P)_{\textnormal{red}}, (\iota_\fp)_{\textnormal{red}})$ is a local log-regular ring follows from \cite[Remark 2.20]{INS25}.
    By applying the structure theorem of complete local log-regular rings to the local log-regular ring $(R_\fp, (\mcQ_P)_{\textnormal{red}}, (\iota_\fp)_{\textnormal{red}})$, we obtain the desired claims.
\end{proof}

\begin{remark}
    The main theorem provides a counterexample of the converse in \Cref{LocaliryVS}.
    Indeed, in the setting of the main theorem, suppose that $A$ is not an integral domain.
    Then, for any prime ideal $\fp$ of $\mcQ$, $(A[\mcQ]_\fp, \mcQ_P, \iota_\fp)$ is a local log-regular ring, in particular, it is very solid.
    However $(A[\mcQ], \mcQ, \iota)$ is not very solid by \Cref{NonVSRemark}.
    This implies that the converse in \Cref{LocaliryVS} does not hold.
\end{remark}


In the rest of this note, we relate our local results to the log-regularity of log schemes.
The generalities of sheaves of monoids and log schemes can be found in \cite[Chapter \textbf{II} and Chapter \textbf{III}]{OgusBook}.

Let $(R,\mcQ,\alpha)$ be a log ring.
Set $X := \Spec(R)$.
The homomorphism $\alpha$ induces a homomorphism of sheaves of monoids $\alpha_X : \mcQ_X \to \mathcal{O}_X$\footnote{In general, for a scheme $X$, a homomorphism of sheaves of monoids $\alpha_X : \mathcal{M}_X \to \mathcal{O}_X$ called a \textit{prelog structure} on $X$.}, where $\mcQ_X$ is the constant sheaf on $X$ associated to $\mcQ$ and $\mathcal{O}_X$ is the structure sheaf of $X$.
We define $\mcQ_X^{log}$ as the pushout $\mcQ_X \oplus_{\alpha_X^{-1}(\mathcal{O}_X^\times)} \mathcal{O}_X^\times$ of the diagram
\[
\xymatrix{
\alpha_X^{-1}(\mathcal{O}_X^\times) \ar[r] \ar[d] & \mcQ_X \\
\mathcal{O}_X^\times
}
\]
where $\mathcal{O}_X^\times$ is the sheaf of units of $\mathcal{O}_X$.
By construction, we obtain $\alpha_X^{log} : \mcQ_X^{log} \to \mathcal{O}_X$ that induces the isomorphism $(\alpha^{log}_X)^{-1}(\mathcal{O}_X^\times) \cong \mathcal{O}_X^\times$.
Therefre $(X,\mathcal{Q}^{log}_X,\alpha^{log}_X)$ is a log scheme\footnote{A \textit{logarithmic structure} (or \textit{log structure}, in short) $\alpha_X : \mathcal{M}_X \to \mathcal{O}_X$ on $X$ is a prelog structure such that $\alpha_X^{-1}(\mathcal{O}_X^\times) \cong \mathcal{Q}_X^\times$. A pair $(X,\mathcal{M}_X)$ (or a triple $(X,\mathcal{M}_X,\alpha_X)$) is called a \textit{log scheme}.}.

In the following, we set $\mcQ^{log}_{X,x} := (\mcQ_X^{log})_x$ and $\alpha^{log}_{X,x} := (\alpha_X^{log})_x$.
By definition we have equalities
\begin{equation}\label{StalkIsom}
\mcQ^{log}_{X,x} = \mcQ_{X,x} \oplus_{\alpha_{X,x}^{-1}(\mathcal{O}_{X,x}^\times)} \mathcal{O}_{X,x}^\times = \mcQ \oplus_{\alpha^{-1}(R_\fp^\times\cap R)} R_\fp^\times
\end{equation}
where $x:=\fp \in \Spec(R)$ and $\alpha_{X,x} : \mathcal{Q}_{X,x} \to \mathcal{O}_{X,x}$ is the homomorphism of the stalks induced by $\alpha_X$.

\begin{lemma}\label{LogarithmicUnits}
    Let $(R,\mcQ,\alpha)$ be a log ring and let $\alpha_X^{log}: \mcQ^{log}_X \to \mathcal{O}_X$ be the induced log structure on $X := \Spec(R)$.
    Fix a point $x:=\fp \in \Spec(R)$ and an element $(a,b) \in \mcQ_{X,x}^{log}$.
    Then $(a,b) \in (\mcQ_{X,x}^{log})^*$ if and only if $a \in \alpha^{-1}(R_\fp^\times \cap R)$ and $b \in R^\times_\fp$.
\end{lemma}
\begin{proof}
    Pick an element $(a,b) \in (\mcQ_{X,x}^{log})^*$.
    Then there exists an element $(u,v) \in \mcQ_{X,x}^{log}$ such that $(a,b)+(u,v) = (a+u,bv) = (0,1)$ in $\mcQ_{X,x}^{log}$.
    By the definition of the pushout, there exist elements $s, s' \in \alpha^{-1}(R_\fp^\times \cap R)$ such that $a+u+s = s'$ and $(bv)\cdot\frac{\alpha(s)}{1} = \frac{\alpha(s')}{1}$.
    Since $\alpha^{-1}(R_\fp^\times\cap R) = \mathcal{Q}\setminus\alpha^{-1}(\fp)$ is a face of $\mcQ$, the equality $a+u+s = s'$ (resp. $(bv)\cdot\frac{\alpha(s)}{1} = \frac{\alpha(s')}{1}$) implies $a \in \alpha^{-1}(R_\fp^\times \cap R)$ (resp. $b \in R_\fp^\times$), as desired.
    
    Conversely, suppose that $a \in \alpha^{-1}(R_\fp^\times \cap R)$ and $b \in R_\fp^\times$.
    Then there exists an element $a' \in R_\fp^{\times}$ and $b' \in R_\fp$ such that $\frac{\alpha(a)}{1} a' = 1$ and $bb' =1$.
    Hence we obtain equalities $(a,b) + (0,a'b') = (a,a'bb') = (0, \frac{\alpha(a)}{1}a' ) = (0,1)$, which implises that $(a,b) \in (\mcQ_{X,x}^{log})^*$.
\end{proof}


\begin{lemma}\label{IsomAssRedMon}
    Let $(R,\mcQ,\alpha)$ be a log ring and let $\alpha_X^{log}: \mcQ^{log}_X \to \mathcal{O}_X$ be the induced log structure on $X := \Spec(R)$.
    Fix a point $x:=\fp \in \Spec(R)$.
    Then the associated reduced monoid $(\mcQ_{X,x}^{log})_{\textnormal{red}}$ is isomorphic to $(\mcQ_{\alpha^{-1}(\fp)})_{\textnormal{red}}$.
\end{lemma}

\begin{proof}
    The equality (\ref{StalkIsom}) induces $(\mcQ_{X,x}^{log})_{\textnormal{red}} = (\mcQ \oplus_{\alpha^{-1}(R_\fp^\times \cap R)} R^{\times}_\fp)_{\textnormal{red}}$.
    Note that $\alpha^{-1}(\fp) = \mcQ \setminus \alpha^{-1}(R^\times_\fp \cap R)$.
    Then we have a homomorphism $\mcQ_{\alpha^{-1}(\fp)} \to \mcQ_{X,x}^{log}$ which sends $a-b$ to $(a,\frac{1}{\alpha(b)})$. 
    This induces an isomorphism $(\mcQ_{\alpha^{-1}(\fp)})_{\textnormal{red}} \xrightarrow{\cong} (\mcQ_{X,x}^{log})_{\textnormal{red}}$.
    Indeed, since any element $[(a,b)] \in (\mcQ_{X,x}^{log})_{\textnormal{red}}$ is equal to $[(a,1)]$ and an element $[a-0] \in (\mathcal{Q}_{\alpha^{-1}(\fp)})_{\textnormal{red}}$ maps to $[(a,1)]$, $(\mcQ_{\alpha^{-1}(\fp)})_{\textnormal{red}} \to (\mcQ_{X,x}^{log})_{\textnormal{red}}$ is surjective.
    Pick elements $[a-0]$ and $[a'-0]$ of $(\mathcal{Q}_{\alpha^{-1}(\fp)})_{\textnormal{red}}$ such that $[(a,1)] = [(a',1)]$.
    By \Cref{LogarithmicUnits}, there exist $u \in \alpha^{-1}(R^\times_\fp \cap R)$ and $v \in R_\fp^\times$ such that $(a,1) = (a',1) + (u,v) = (a'+u,v)$ in $\mcQ_{X,x}^{log}$.
    Moreover, there exist elements $s, s' \in \alpha^{-1}(R^\times_\fp \cap R)$ such that $a+s = a'+u+s'$ in $\mcQ$.
    Note that $u-0$, $s-0$, and $s'-0$ are units in $\mcQ_{\alpha^{-1}(\fp)}$.
    Hence we obtain the following equalities in $(\mcQ_{\alpha^{-1}(\fp)})_{\textnormal{red}}$
    \[
    [a-0] = [(a+s)-0] = [(a'+u+s')-0] =[a'-0],
    \]
    which implies $(\mcQ_{\alpha^{-1}(\fp)})_{\textnormal{red}} \to (\mcQ_{X,x}^{log})_{\textnormal{red}}$ is injective.
\end{proof}

We define the log-regularity of a log ring.
\begin{definition}\label{DefNonlocalLogReg}
    Let $(R,\mcQ,\alpha)$ be a log ring such that $R$ is Noetherian and $\mcQ$ is cancellative, finitely generated, and root closed.
    Then $(R,\mcQ,\alpha)$ is \textit{log-regular} if $(R_\fp, \mcQ_{\alpha^{-1}(\fp)},\alpha_\fp)$ is local log-regular for every $\fp \in \Spec(R)$.
\end{definition}

\begin{remark}
By \Cref{LocalizationProp}, if $\mcQ$ is cancellative, finitely generated, and root closed, then so is $\mcQ_\fp$ for any $\fp\in \Spec(\mcQ)$.
Hence, in the setting of \Cref{DefNonlocalLogReg}, $(\mcQ_{\alpha^{-1}(\fp)})_{\textnormal{red}}$ is finitely generated, cancellative, and root closed for any $\fp \in \Spec(R)$.
\end{remark}

Let $(X,\mathcal{M}_X,\alpha_X)$ be a fine saturated log scheme.
Then it is \textit{log-regular} if $(\mathcal{O}_{X,x}, \mathcal{M}_{X,x},\alpha_{X,x})$ is local log-regular for every point $x \in X$ (\cite[Chapter \textbf{III}, Theorem 1.11.1]{OgusBook}).
We note that if $(R,\mcQ,\alpha)$ is a log ring such that $\mcQ$ is cancellative, finitely generated, and root closed, then $(X:=\Spec(R), \mcQ_{X}^{log}, \alpha_X^{log})$ is a fine saturated log scheme (see \cite[Example 3]{Tho06} and just below \cite[Definition 4]{Tho06}).
The following proposition implies that the log-regularity of a Noetherian log ring is equivalent to that of the associated log scheme.

\begin{proposition}\label{PropLogRegularRingScheme}
    Let $(R,\mcQ,\alpha)$ be a log ring such that $R$ is Noetherian and $\mcQ$ is finitely generated, cancellative, and root closed.
    Set $X:=\Spec(R)$.
    Then $(R,\mcQ,\alpha)$ is log-regular if and only if $(X, \mcQ_{X}^{log},\alpha_{X}^{log})$ is log-regular.
\end{proposition}

\begin{proof}
    Fix a point $x=\fp \in X$.
    Then we obtain the following equalities in $\mathcal{O}_{X,x} = R_\fp$:
    \[
    \begin{array}{ccl}
      I_{\alpha_{X,x}^{log}} & = & \left\langle \alpha_{X,x}^{log}((a,b)) \in \mathcal{O}_{X,x}~|~  (a,b) \in (\mcQ^{log}_{X,x})^+ \right\rangle \\[3mm]
      & = & \left\langle \alpha_{X,x}^{log}((a,1)) \in \mathcal{O}_{X,x} ~|~ a \in \mcQ \setminus \alpha^{-1}(R_\fp^\times \cap R) \right\rangle \\[3mm]
      & = & \left\langle \frac{\alpha(a)}{1} \in R_\fp ~|~ a \in \mcQ \setminus \alpha^{-1}(R_\fp^\times \cap R) \right\rangle \\[3mm]
      & = & \left\langle \alpha_\fp(a-0) \in R_\fp ~|~a-0 \in \alpha^{-1}(\fp)\mcQ_{\alpha^{-1}(\fp)} = (\mcQ_{\alpha^{-1}(\fp)})^+ \right\rangle \\[3mm]
      & = & \left\langle \alpha_\fp(a-b) \in R_\fp ~|~a-0 \in (\mcQ_{\alpha^{-1}(\fp)})^+, 0-b \in (\mcQ_{\alpha^{-1}(\fp)})^\times \right\rangle \\[3mm]
      & = & I_{\alpha_\fp},
    \end{array}
    \]
    where the fourth equality follows from \Cref{LogarithmicUnits}.
    Hence we obtain $R_\fp/I_{\alpha_{\fp}} = \mathcal{O}_{X,x}/I_{\alpha_{X,x}^{log}}$.
    Moreover, by \Cref{IsomAssRedMon}, we obtain $\dim \mcQ_{\alpha^{-1}(\fp)} = \dim (\mcQ_{\alpha^{-1}(\fp)})_{\textnormal{red}} = \dim (\mcQ_{X,x})_{\textnormal{red}} = \dim \mcQ_{X,x}^{log}$.
    Hence the assertion holds.
\end{proof}

\begin{corollary}\label{LogRegMonAlg}
    Let $\mathcal{Q}$ be a cancellative, finitely generated, and root closed monoid such that the quotient group $\mathcal{Q}^{gp}$ is torsionfree.
    Let $A$ be a regular ring that is not necessarily local.
    Let $R := A[\mathcal{Q}]$ be the monoid algebra.
    Let $\iota : \mcQ\to A[\mcQ]$ be the canonical homomorphism.
    Set $X := \Spec(A[\mcQ])$.
    Then $(X, \mcQ^{log}_X, \iota^{log}_X)$ is a log-regular scheme.
\end{corollary}

\begin{proof}
    By \Cref{maintheorem1}, the local log ring $(A[\mcQ]_\fp,\mcQ_{\alpha^{-1}(\fp)},\alpha_\fp)$ is log-regular for every prime ideal
    $\fp\in X$.
    Hence $(A[\mcQ],\mcQ,\iota)$ is log-regular.
    Therefore, by \Cref{PropLogRegularRingScheme}, the associated log scheme $(X,\mathcal{Q}_X^{log}, \iota_X^{log})$ is log-regular.
\end{proof}

\begin{acknowledgements}
    The author is grateful to Ryo Takahashi for suggesting the importance of clarifying typical examples of non-complete log-regular rings.
    The author also thanks the anonymous referees for their careful reading and constructive feedback, which significantly improved the manuscript.
    The author was partially supported by Grant for Basic Science Research Projects from the Sumitomo Foundation Grant number 2402220.
\end{acknowledgements}

\bibliographystyle{alpha}
\bibliography{Ishiro_ref}

\end{document}